\title[Weak compactness in Lipschitz-free spaces]{Weak compactness in Lipschitz-free spaces over superreflexive spaces}
\author[Z. Silber]{Zdeněk Silber}
\email{zdesil@seznam.cz}
\keywords{Lipschitz-free spaces, Schur property, superreflexivity, super weakly compact sets, weak sequential completeness}
\address{Institute of Mathematics of the Polish Academy of Sciences,
ul. \'Sniadeckich 8,  00-656 Warszawa, Poland}
\subjclass[2020]{46B03, 46B20, 46B50}
\newcommand{\N}{\mathbb{N}}
\newcommand{\R}{\mathbb{R}}
\newcommand{\F}{\mathcal{F}}
\newtheorem{theorem}{Theorem}[section]
\newtheorem{prop}[theorem]{Proposition}
\newtheorem{corollary}[theorem]{Corollary}
\newtheorem{question}{Question}
\theoremstyle{definition}
\newtheorem{definition}[theorem]{Definition}
\newtheorem{remark}[theorem]{Remark}
\newtheorem{example}[theorem]{Example}
\newcommand{\norm}[1]{\left\lVert#1\right\rVert}
\begin{document}

\baselineskip=17pt

\begin{abstract}
    We show that the Lipschitz-free space $\mathcal{F}(X)$ over a superreflexive Banach space $X$ has the property that every weakly precompact subset of $\mathcal{F}(X)$ is relatively super weakly compact, showing that this space "behaves like $L_1$" in this context. As consequences we show that $\mathcal{F}(X)$ enjoys the weak Banach-Saks property and that every subspace of $\mathcal{F}(X)$ with nontrivial type is superreflexive. Further, weakly compact subsets of $\mathcal{F}(X)$ are super weakly compact and hence have many strong properties. To prove the result, we use a modification of the proof of weak sequential completeness of $\mathcal{F}(X)$ by Kochanek and Pernecká and an appropriate version of compact reduction in the spirit of Aliaga, No{\^u}s, Petitjean and Procházka.
\end{abstract}

\maketitle

\section{Introduction}

This paper is dedicated to strengthening some results concerning weak compactness in Lipschitz-free spaces over superreflexive spaces. Note that the Lipschitz-free space over the real line, $\mathcal{F}(\R)$, is isomorphic to $L_1$ (see Example 2.1. of \cite{Godefroy2015}). Structure of Lipschitz-free spaces over Banach spaces of higher dimension seems to be more complicated, even when the dimension is finite. However, some properties of $L_1$ are shared by Lipschitz-free spaces over Banach spaces of higher dimension. In \cite{CuthDouchaWojtasczyk2016}, the authors showed that for any $M \subseteq \R^n$, the Lipchitz-free space $\mathcal{F}(M)$ is weakly sequentially complete. Later, this result was extended in \cite{KochanekPernecka} for $M$ compact subset of a superreflexive space. Both of these results used a modification of a method by Bourgain \cite{Bourgain1983,Bourgain1984} (see Theorem III.D.31 of \cite{WojtaszczykBook}). Finally, in \cite{AliagaCompRed}, the authors proved a compact reduction theorem for weak sequential completeness, showing that $\mathcal{F}(X)$ is weakly sequentially complete if $X$ is a superreflexive Banach space.

The results above shed some light on the structure of Lipschitz-free spaces over superreflexive spaces. Weak sequential completeness gives us that a bounded subset $A$ of $\mathcal{F}(X)$ is relatively weakly compact if and only if it is weakly precompact (i.e. if and only if there is no sequence in $A$ equivalent to the canonical basis of $\ell_1$), and also implies that no non weakly sequentially complete Banach space (in particular $c_0$) can be isomorphic to a subspace of $\mathcal{F}(X)$.

A different area of study is the theory of super weakly compact sets. Super weakly compact sets are a localization of superreflexivity in the same sense as weakly compact sets are a localization of reflexivity. Many papers were devoted to this study \cite{Cheng2018,Cheng2010,GrelierRaja2022,LancienRaja2022,Raja2016,Kun2021}, and many interesting properties of super weakly compact sets are known (see Subsection 1.2. and references therein). There are spaces in which every weakly compact set is super weakly compact -- superreflexive spaces, Schur spaces, $L_1(\mu)$, $C(K)^*$ or preduals of JBW$^*$ triples (see Section 6 of \cite{LancienRaja2022}). The main result of this paper is that the Lipschitz-free space $\mathcal{F}(X)$ for $X$ superreflexive enjoys this property as well. Hence, in such $\mathcal{F}(X)$, it holds that a bounded set $A$ is weakly precompact if and only if it is relatively weakly compact if and only if it is relatively super weakly compact (we call this property of Banach spaces property (R), see Section 2). This gives us that any weakly compact subset of $\mathcal{F}(X)$ for $X$ superreflexive enjoys all of the strong properties of super weakly compact sets (see e.g. Propositions \ref{Proposition-SWCC-char} and \ref{Proposition-SWC-char}).

As mentioned in the above paragraph, we say that a Banach space $X$ has \textit{property (R)} if every weakly precompact subset of $X$ is relatively super weakly compact. In \cite{Rosenthal1973}, Rosenthal showed that every subspace of $L_1(\mu)$, for finite $\mu$, which does not contain $\ell_1$ embeds in $L_p(\mu)$ for some $p > 1$ (and such $L_p(\mu)$ is superreflexive as it is uniformly convex). Property (R) can be thus understood as a localization of the above result of Rosenthal, where we consider weakly precompact sets instead of subspaces not containing $\ell_1$ and relatively super weakly compact sets instead of superreflexive spaces. It is clear (see Propositon \ref{Proposition-R-char}) that a Banach space has property (R) if and only if it is weakly sequentially complete and every weakly compact subset of $X$ is super weakly compact. Property (R) is an intermediate property, stronger than weak sequential completeness but weaker than the Schur property (as being relatively compact implies being relatively super weakly compact, which in turn implies being relatively weakly compact). We devote Section 2 to the study of property (R).

Finally, once we show that $\mathcal{F}(X)$ has property (R) for superreflexive $X$, we can improve some non-embeddability results. In particular, we show that any subspace of such $\mathcal{F}(X)$ with nontrivial type is superreflexive (Proposition \ref{Proposition-Properties-R}). This implies that the Banach spaces $(\bigoplus_{N=1}^\infty \ell_1^N)_{\ell_p}$, $(\bigoplus_{N=1}^\infty \ell_\infty^N)_{\ell_p}$, for $p > 1$, cannot embed in $\mathcal{F}(X)$ (Corollary \ref{Corollary-R}). Finally, we show that any Banach space with property (R), and thus $\mathcal{F}(X)$ for superreflexive $X$, has the weak Banach-Saks property (Proposition \ref{Proposition-R-WeakBS}).

The paper is organised as follows. In the rest of the introductory section we recall all the key notions necessary for the paper, namely weak sequential completeness, Schur property, Lipschitz-free spaces and some propeties of relatively super weakly compact sets. In the second section we introduce the property (R), prove some of its properties and give examples. Third section is devoted to the proof that Lipschitz-free spaces over superreflexive spaces have property (R). In the last section we give a proof of compact reduction for the weak Banach-Saks property due to R. Aliaga, G. Grelier and A. Procházka and mention some open problems

\subsection{Weak sequential completeness and Schur property}

Let us begin with the definition.

\begin{definition}
    A sequence $(x_n)_{n \in \N}$ in a Banach space $X$ is \textit{weakly Cauchy} if for every $x^* \in X^*$ the sequence of scalars $(x^*(x_n))_{n \in \N}$ is convergent.
    A Banach space $X$ is \textit{weakly sequentially complete} if every weakly Cauchy sequence in $X$ is weakly convergent.
\end{definition}

Application of the uniform boundedness principle yields that a sequence $(x_n)_{n \in \N}$ in $X$ is weakly Cauchy if and only if there is some $x^{**} \in X^{**}$ such that $(x_n)_{n \in \N}$ converges to $x^{**}$ in the weak$^*$ topology of $X^{**}$. Clearly, every reflexive space is weakly sequentially complete. An example of a nonreflexive Banach space which is weakly sequentially complete is $L_1(\mu)$.

\begin{definition}
    A Banach space $X$ has the \textit{Schur property} (or $X$ is a \textit{Schur space}) if every weakly convergent sequence in $X$ is norm convergent.
\end{definition}

It is easy to see that in a Schur space every weakly Cauchy sequence is norm Cauchy, and thus Schur spaces are weakly sequentially complete. The typical example of a Banach space with the Schur property is $\ell_1$. Schur property is, in a sense, very far away from reflexivity -- if a Banach space is simultaneously reflexive and has the Schur property, then it must be finite dimensional (this follows from Eberlein-Šmulyan theorem).

Weak sequential completeness is a property which is common to both reflexive and Schur spaces. Recall the statement of Rosenthal's $\ell_1$ theorem \cite{Rosenthal1974}: \textit{Every bounded sequence in a Banach space $X$ has a subsequence which is either weakly Cauchy or equivalent to the canonical basis of $\ell_1$}. This theorem gives us a characterization of both weak sequential completness and Schur property. We first recall the definition of weakly precompact sets.

\begin{definition}
    A bounded subset $A$ of a Banach space $X$ is \textit{weakly precompact} if it does not contain a sequence equivalent to the canonical basis of $\ell_1$.
\end{definition}

By Rosenthal's $\ell_1$ theorem, a bounded set $A$ is weakly precompact if and only if every sequence in $A$ has a weakly Cauchy subsequence. Weakly precompact sets are preserved under convex hulls (see e.g. Addendum of \cite{Rosenthal1977}) and images under bounded linear operators (this follows from the fact that if a seminormalised sequence $(x_n)_{n \in \N}$ maps onto the canonical basis of $\ell_1$ by a bounded linear map, then $(x_n)_{n \in \N}$ must itself be equivalent to the canonical basis $\ell_1$). The following propositions follow immediately from the definitions, Rosenthal's $\ell_1$ theorem and Eberlein-Šmulyan theorem.

\begin{prop} \label{Proposition-WSC-char}
    Let $X$ be a Banach space. The following are equivalent:
    \begin{enumerate}
        \item $X$ is weakly sequentially complete;
        \item every bounded sequence in $X$ has a subsequence which is either weakly convergent or equivalent to the $\ell_1$ basis;
        \item every weakly precompact subset of $X$ is relatively weakly compact.
    \end{enumerate}
\end{prop}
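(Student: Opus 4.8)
The plan is to prove the cycle of implications $(1) \Rightarrow (2) \Rightarrow (3) \Rightarrow (1)$, using Rosenthal's $\ell_1$ theorem to pass to well-behaved subsequences and the Eberlein--Šmulyan theorem to convert the statement ``every sequence has a weakly convergent subsequence'' into relative weak compactness.

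For $(1) \Rightarrow (2)$, I would take an arbitrary bounded sequence $(x_n)_{n \in \N}$ in $X$ and apply Rosenthal's $\ell_1$ theorem to extract a subsequence that is either weakly Cauchy or equivalent to the canonical basis of $\ell_1$; in the former case weak sequential completeness upgrades ``weakly Cauchy'' to ``weakly convergent'', which is exactly what $(2)$ demands. For $(2) \Rightarrow (3)$, let $A \subseteq X$ be weakly precompact. Given any sequence in $A$, condition $(2)$ supplies a subsequence that is weakly convergent or equivalent to the $\ell_1$ basis; the second alternative is ruled out by the very definition of weak precompactness, so every sequence in $A$ admits a weakly convergent subsequence, and the Eberlein--Šmulyan theorem then yields that $A$ is relatively weakly compact.

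For $(3) \Rightarrow (1)$, let $(x_n)_{n \in \N}$ be weakly Cauchy. As noted after the definition of weakly Cauchy sequences, the uniform boundedness principle makes $(x_n)_{n \in \N}$ norm bounded; moreover the set $\{x_n : n \in \N\}$ is weakly precompact, since every subsequence of a weakly Cauchy sequence is again weakly Cauchy, while the canonical basis of $\ell_1$ (and hence anything equivalent to it) is not weakly Cauchy --- as witnessed by any functional with entries $\pm 1$ in $\ell_\infty = \ell_1^*$. By $(3)$ the set $\{x_n : n \in \N\}$ is relatively weakly compact, so by Eberlein--Šmulyan some subsequence $(x_{n_k})_{k \in \N}$ converges weakly to some $x \in X$; since $x^*(x_n)$ converges for every $x^* \in X^*$, its limit must equal $\lim_k x^*(x_{n_k}) = x^*(x)$, so in fact $x_n \to x$ weakly and $X$ is weakly sequentially complete.

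The argument is entirely routine and I do not anticipate a genuine obstacle; the only points deserving a word of care are the small observation in $(3) \Rightarrow (1)$ that a weakly Cauchy sequence is weakly precompact (equivalently, that the $\ell_1$ basis fails to be weakly Cauchy) and the implicit use of the uniform boundedness principle to secure boundedness of a weakly Cauchy sequence before invoking $(3)$.
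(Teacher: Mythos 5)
Your proof is correct and uses exactly the ingredients the paper invokes (Rosenthal's $\ell_1$ theorem and the Eberlein--Šmulyan theorem); the paper simply states that the proposition ``follows immediately'' from these, and your cycle $(1)\Rightarrow(2)\Rightarrow(3)\Rightarrow(1)$ is the natural way to fill in that routine verification.
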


\begin{prop} \label{Proposition-Schur-char}
    Let $X$ be a Banach space. The following are equivalent:
    \begin{enumerate}
        \item $X$ has the Schur property;
        \item every bounded sequence in $X$ has a subsequence which is either norm convergent or equivalent to the $\ell_1$ basis;
        \item every weakly precompact subset of $X$ is relatively compact.
    \end{enumerate}
\end{prop}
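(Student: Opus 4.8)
I would prove this as a cycle of implications $(1) \Rightarrow (2) \Rightarrow (3) \Rightarrow (1)$, the main tools being Rosenthal's $\ell_1$ theorem and the metrizability of the norm topology (so that relative compactness coincides with relative sequential compactness). This parallels the proof of Proposition \ref{Proposition-WSC-char}, with ``norm'' in place of ``weak'' and metrizability in place of the Eberlein--Šmulyan theorem.

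For $(1) \Rightarrow (2)$: given a bounded sequence $(x_n)_{n \in \N}$, apply Rosenthal's $\ell_1$ theorem to extract a subsequence that is either weakly Cauchy or equivalent to the canonical basis of $\ell_1$. In the second case there is nothing to prove, so suppose $(x_{n_k})_k$ is weakly Cauchy. Then it is norm Cauchy: otherwise one could find $\varepsilon > 0$ and indices $k_j < l_j$ with $\norm{x_{n_{k_j}} - x_{n_{l_j}}} \geq \varepsilon$, while $x_{n_{k_j}} - x_{n_{l_j}} \to 0$ weakly, contradicting the Schur property. Completeness of $X$ then gives a norm convergent subsequence. For $(2) \Rightarrow (3)$: let $A$ be weakly precompact and let $(x_n)_n$ be a sequence in $A$; by $(2)$ it has a subsequence that is norm convergent or equivalent to the $\ell_1$ basis, and the latter is impossible by weak precompactness of $A$. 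Thus every sequence in $A$ has a norm convergent subsequence, so $\overline{A}$ is sequentially compact, hence $A$ is relatively compact.

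For $(3) \Rightarrow (1)$: let $x_n \to x$ weakly. I would first observe that every subsequence $(x_{m_j})_j$ is, viewed as a set, weakly precompact: any sequence drawn from $\{x_{m_j} : j \in \N\}$ either has a constant subsequence or a subsequence along a strictly increasing index set converging weakly to $x$, and in both cases it has a weakly Cauchy subsequence, so by Rosenthal's theorem this set contains no copy of the $\ell_1$ basis. By $(3)$ the set $\{x_{m_j} : j\}$ is relatively compact, so $(x_{m_j})_j$ has a norm convergent further subsequence, whose limit must be $x$ by uniqueness of weak limits. Since every subsequence of $(x_n)_n$ admits a further subsequence converging in norm to $x$, the whole sequence converges to $x$ in norm, so $X$ has the Schur property. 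I do not expect a genuine obstacle here; the only points needing a line of care are the upgrade from weakly Cauchy to norm convergent in $(1) \Rightarrow (2)$ (Schur applied to differences, plus completeness) and the standard subsequence principle used to finish $(3) \Rightarrow (1)$.
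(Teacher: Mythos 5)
Your proof is correct and fills in exactly the routine argument the paper leaves implicit (the paper simply asserts that the proposition ``follows immediately from the definitions, Rosenthal's $\ell_1$ theorem and Eberlein--Šmulyan theorem''). Your observation that for the Schur version one only needs metrizability of the norm topology rather than Eberlein--Šmulyan is accurate, and the three implications, including the upgrade from weakly Cauchy to norm Cauchy via the Schur property applied to differences, are all sound.
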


\subsection{Lipchitz-free spaces}

Let $(M,d)$ be a pointed metric space (that is, a metric space with a distinguished point $0 \in M$). We denote by $\operatorname{Lip}_0(M)$ the Banach space of Lipschitz functions on $M$ which map $0 \in M$ to $0 \in \R$, equipped with the Lipschitz-constant norm defined by formula
$$\norm{f}_{\operatorname{Lip}} = \sup \left\{ 
\frac{|f(p)-f(q)|}{d(p,q)}: p,q \in M, p \neq q \right\}.$$
Then $M$ embeds isometrically in the dual space $\operatorname{Lip}_0(M)^*$ via the evaluation map $\delta:M \rightarrow \operatorname{Lip}_0(M)^*$, $\langle \delta(p),f \rangle = f(p)$, $p \in M$, $f \in \operatorname{Lip}_0(M)$. The \textit{Lipschitz-free space} $\mathcal{F}(M)$ is then the closed linear span of $\delta(M)$ with the norm $\norm{\cdot}_\mathcal{F}$ induced by the dual norm of $\operatorname{Lip}_0(M)^*$.

The Lipschitz-free space $\mathcal{F}(M)$ is an isometric predual of $\operatorname{Lip}_0(M)$ and is characterized by the following universality property: For any Banach space $X$ and Lipschitz map $L:M \rightarrow X$ with $L(0) = 0$ there is a unique linear map $\tilde{L}: \mathcal{F}(M) \rightarrow X$ such that $\tilde{L} \circ \delta = L$ and $\|\tilde{L} \| = \norm{L}_{\operatorname{Lip}}$. For more information about Lipchitz-free spaces see the book \cite{Weaver1999} or the survey paper \cite{Godefroy2015}.

\subsection{Super weakly compact sets}

In the same sense as weak compactness can be understood as a localization of reflexivity (that is $X$ is reflexive if and only if $B_X$ is weakly compact), the notion of super weak compactness was introduced as a localization of superreflexivity.

\begin{definition}
    Let $A$ be a subset of a Banach space $X$. We say that $A$ is \textit{relatively super weakly compact} if $K^{\mathcal{U}}$ is weakly compact in $X^{\mathcal{U}}$ for every free ultrafilter $\mathcal{U}$ on $\N$. We say that $A$ is \textit{super weakly compact} if it is relatively super weakly compact and weakly closed.
\end{definition}

Let us recall that $X^{\mathcal{U}}$ is the quotient of $\ell_{\infty} (X)$ by the null space $N_{\mathcal{U}} = \{(x_n)_{n \in \N} \in \ell_\infty(X): \lim_\mathcal{U} x_n = 0\}$
and $A^{\mathcal{U}}$ is the subset of $X^{\mathcal{U}}$, whose elements have representants $(x_n)_{n \in \N}$ where $x_n \in A$ for each $n \in \N$. For more details see e.g. \cite{LancienRaja2022}. Every relatively norm compact set is relatively super weakly compact (this was shown in Corollary 2.15. of \cite{Cheng2010} for convex sets, but both relative compactness and relative super weak compactness are preserved by convex hulls, see \cite{Kun2021}) and every relatively super weakly compact set is clearly relatively weakly compact.

It follows that a Banach space $X$ is superreflexive if and only if $B_X$ is super weakly compact. However, there are super weakly compact sets which do not embed in superreflexive spaces \cite[Example 3.11]{Raja2016}. 

Super weakly compact sets admit many characterizations and have some very nice properties. We name a few, first for the case of convex sets. The following proposition is Proposition 2.1., Theorem 3.5. and Theorem 3.8. of \cite{LancienRaja2022}, for unexplained notions see \cite{LancienRaja2022}.

\begin{prop} \label{Proposition-SWCC-char}
    Let $A$ be a bounded closed convex subset of a Banach space $X$. The following are equivalent:
    \begin{enumerate}
        \item $A$ is super weakly compact;
        \item $A$ is finitely dentable;
        \item $A$ does not have the finite tree property;
        \item $A$ supports a bounded uniformly convex function;
        \item $X$ has an equivalent norm whose square is uniformly convex on $A$;
        \item Every real Lipschitz (or uniformly continuous) function on $A$ can be uniformly approximated by differences of convex Lipschitz functions;
        \item There is no $\theta > 0$ such that for every $n \in \N$ there are sequences $(x^n_k)_{k \leq n} \subseteq A$, $(f^n_k)_{k \leq n} \subseteq B_{X^*}$ such that $f^n_k(x^n_j) = 0$ if $k > j$ and $f^n_k(x^n_j) = \theta$ if $k \leq j$.
    \end{enumerate}
\end{prop}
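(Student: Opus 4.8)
This proposition collects several results of Lancien and Raja (Proposition 2.1 and Theorems 3.5 and 3.8 of \cite{LancienRaja2022}), so in the present paper it will simply be quoted; I nevertheless indicate the strategy one would follow to prove it. The plan is to make (1) the hub and route every other condition through it. The cornerstone is the equivalence (1)$\Leftrightarrow$(7): one shows that $A^{\mathcal{U}}$ fails to be weakly compact in $X^{\mathcal{U}}$ for some -- equivalently, for every -- free ultrafilter $\mathcal{U}$ exactly when the finite triangular arrays of (7) can be produced with a uniform $\theta>0$. If $A$ is not relatively super weakly compact, then by the Grothendieck/James double-limit criterion applied inside the ultrapower one obtains a bounded sequence in $A^{\mathcal{U}}$ and functionals in $B_{(X^{\mathcal{U}})^*}$ realising the infinite version of the pattern, namely $f_k(x_j)=\theta$ when $k\le j$ and $f_k(x_j)=0$ when $k>j$; choosing bounded representatives in $X$ and $X^*$ and restricting to initial segments yields, for every $n$, arrays of length $n$ as in (7). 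Conversely, arrays of every length $n$ can be amalgamated along $\mathcal{U}$ into an element of $A^{\mathcal{U}}$ and a functional witnessing the failure of the double-limit criterion there, hence the failure of weak compactness of $A^{\mathcal{U}}$.

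The equivalences (2)$\Leftrightarrow$(3)$\Leftrightarrow$(7) are the localizations of the classical circle of ideas behind superreflexivity: finite dentability, the absence of the finite $\delta$-tree property, and the array condition of (7) are mutually interderivable quantitative versions of dentability, the finite tree property and the non-uniform-containment of $\ell_1^n$, and one passes between them as in the Enflo--James theory, bookkeeping the constants so that the quantifier ``for every $n$'' is preserved. The renorming statements (4) and (5) are then obtained by localizing the James--Enflo renorming theorem to $A$: from finite dentability (equivalently, from (3)) one builds a bounded uniformly convex function on $A$ by a martingale-difference/averaging construction, and an inf-convolution or supremum procedure upgrades it to an equivalent norm on $X$ whose square is uniformly convex on $A$; the reverse implications (4)$\Rightarrow$(7) and (5)$\Rightarrow$(7) are immediate, since a bounded uniformly convex function on $A$ cannot coexist with the arrays of (7).

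The delicate implication, and the one I expect to be the main obstacle, is (5)$\Rightarrow$(6): the approximation of an arbitrary Lipschitz (or uniformly continuous) function $f$ on $A$ by differences of convex Lipschitz functions. The natural route is regularization: using the uniformly convex bump and norm supplied by (4)--(5), apply a Lasry--Lions (double Moreau--Yosida) inf-sup-convolution to $f$, which on the one hand stays within a controlled error of $f$ governed by the modulus of uniform continuity of $f$, and on the other hand is simultaneously semiconvex and semiconcave with Lipschitz constants one can estimate, hence a difference of convex Lipschitz functions; summing a rapidly decreasing series of such corrections produces the desired DC approximation on all of $A$. The point requiring care is that all the quantitative estimates must be uniform over $A$ -- this is precisely where having, via (5), a genuine uniformly convex norm adapted to $A$ (rather than merely working inside some superreflexive superspace) is essential. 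Finally (6)$\Rightarrow$(7) is soft: from the functionals $f^n_k$ of a hypothetical family of arrays one manufactures a uniformly Lipschitz ``sawtooth'' function on $A$ that stays at distance bounded below from every DC function, contradicting (6).
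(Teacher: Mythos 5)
Your proposal handles this statement exactly as the paper does: the proposition is not proved in the paper but quoted from Proposition 2.1, Theorem 3.5 and Theorem 3.8 of the cited work of Lancien and Raja, and you correctly identify this. Your additional sketch of the underlying arguments (ultrapower/double-limit criterion for (1)$\Leftrightarrow$(7), the localized James--Enflo circle for (2)--(5), and Lasry--Lions regularization for (6)) is a faithful outline of how those results are actually established, but it is supplementary to what the paper itself contains.
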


We will later use the equivalence \textit{(1)} $\Leftrightarrow$ \textit{(7)}. The other points were mentioned to illustrate some strong properties of convex super weakly compact sets.

In the absence of convexity, super weakly compact sets do not behave as nicely. However, K. Tu showed in \cite{Kun2021} that the closed convex hull of a relatively super weakly compact set is super weakly compact, so some of the results can be transferred to the non-convex case as well (the paper \cite{LancienRaja2022} is dedicated to this). In particular, in Corollary 2.4. of \cite{LancienRaja2022} the following is shown.

\begin{prop} \label{Proposition-SWC-char}
    Let $A$ be a super weakly compact subset of a Banach space $X$. Then
    \begin{enumerate}
        \item $A$ is uniformly Eberlein;
        \item $A$ has the Banach-Saks property;
        \item $A$ is finitely dentable.
    \end{enumerate}
    None of the above can be reversed.
\end{prop}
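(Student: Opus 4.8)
This statement is recorded as Corollary 2.4 of \cite{LancienRaja2022}; the natural route is to reduce to the convex case via Tu's theorem and then read the conclusions off Proposition \ref{Proposition-SWCC-char}. So I would let $A$ be super weakly compact, put $C = \overline{\operatorname{conv}}(A)$, and recall from \cite{Kun2021} that $C$ is then a bounded closed convex super weakly compact set; since $C$ is weakly compact, hence weakly closed, we have $\overline{A}^{\,w} \subseteq C$. Each of the three properties is inherited by weakly closed subsets — a closed subset of a uniformly Eberlein compactum is again uniformly Eberlein, the Banach-Saks property of a set is a quantification over the sequences it contains and so is trivially monotone, and finite dentability is monotone under set inclusion (immediate from the dentability derivation, equivalently from the ``no finite tree property'' description) — so it suffices to prove \emph{(1)}--\emph{(3)} with $C$ in place of $A$.

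The last two properties for $C$ fall out of Proposition \ref{Proposition-SWCC-char} almost immediately. Finite dentability is the equivalence \emph{(1)} $\Leftrightarrow$ \emph{(2)} there. For the Banach-Saks property I would invoke \emph{(1)} $\Leftrightarrow$ \emph{(5)} to fix an equivalent norm $p$ on $X$ with $p^2$ uniformly convex on $C$; as $C$ is weakly compact it is weakly sequentially compact by Eberlein-Šmulyan, so a given sequence in $C$ has a weakly convergent subsequence $x_n \to x \in C$, and the classical Kakutani argument — choosing inductively a further subsequence so that the Cesàro means $\frac1N \sum_{k=1}^N x_{n_k}$ are $p$-Cauchy, using the modulus of convexity of $p^2$ on $C$ — yields a sequence of averages converging in norm to $x$.

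The substantial point is \emph{(1)} for $C$. By \emph{(1)} $\Leftrightarrow$ \emph{(4)} of Proposition \ref{Proposition-SWCC-char}, $C$ supports a bounded uniformly convex function $\varphi$; from $\varphi$ and its modulus one builds, as in \cite{Raja2016,LancienRaja2022}, a weak-to-weak continuous injection of $C$ into some $\ell_2(\Gamma)$ (roughly, an $\ell_2$-sum of seminorms associated with the sublevel structure of $\varphi$). Being a continuous injection of a compact space into a Hausdorff one, it is a homeomorphism onto its image, which is a weakly compact subset of the Hilbert space $\ell_2(\Gamma)$; hence $C$, and a fortiori $\overline{A}^{\,w}$, is uniformly Eberlein. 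That none of the three implications reverses I would justify by the examples collected in \cite{LancienRaja2022} and the references therein (reflexive spaces, or bounded sets inside them, having one of the three properties while failing super weak compactness). The main obstacle in this scheme is exactly the uniform-Eberlein step: unlike the other two it does not drop out of a single clause of Proposition \ref{Proposition-SWCC-char} but relies on the renorming and finite-representability machinery underlying that proposition.
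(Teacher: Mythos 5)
The paper does not actually prove this proposition: it is imported verbatim as Corollary~2.4 of \cite{LancienRaja2022}, with the preceding sentences only noting that the reduction to the convex case rests on Tu's theorem from \cite{Kun2021}. Your proposal is therefore not in conflict with anything in the paper; rather, it reconstructs in outline the argument that the paper delegates to its reference, and the skeleton you give --- pass to $C=\overline{\operatorname{conv}}(A)$, which is super weakly compact by Tu's theorem, observe that all three properties are inherited by weakly closed subsets, and then read \emph{(2)} and \emph{(3)} off Proposition~\ref{Proposition-SWCC-char} (finite dentability directly, Banach--Saks via the uniformly convex renorming on $C$ and Kakutani's averaging argument) --- is exactly the intended route. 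The monotonicity claims are all fine: the dentability derivation of a subset is contained in that of the ambient set, and the other two properties are manifestly hereditary. The one place where your write-up is a citation rather than a proof is the uniformly Eberlein step, where you assert that a bounded uniformly convex function on $C$ yields a weak-to-weak continuous injection into some $\ell_2(\Gamma)$; that construction is genuinely nontrivial (it is the content of the relevant results in \cite{Raja2016,LancienRaja2022}, built on the Benyamini--Rudin--Wage characterization of uniform Eberlein compacta), so at that point you are doing what the paper does, namely deferring to the literature. The same applies to the non-reversibility claim, which you (like the paper) justify only by pointing to known examples. In short: correct as an outline, with the same reliance on \cite{LancienRaja2022} that the paper itself has, just pushed one level deeper.
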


For some further characterization using equi bi-Lipschitz embeddability of some families of metric spaces see Section 5 of \cite{LancienRaja2022}.

For a quantitative approach and further results, see \cite{Kun2021} and references therein. We will need one of these results, namely a measure of relative super weak compactness introduced in \cite{Kun2021} which is a quantitative analogue to the classical Grothendieck characterization of relative weak compactness.

\begin{definition}
    Let $A$ be a bounded subset of a Banach space $X$. We define
    $$\sigma(A) = \inf \{t > 0: A \subseteq K + tB_X, K \text{ is super weakly compact}\}.$$
\end{definition}

In the next proposition we list some properties of the quantity $\sigma$ we will need in further proofs, namely to show compact reduction for property (R) in Lipschitz-free spaces (Proposition \ref{Proposition-Comp-Red-R}).

\begin{prop} \label{Proposition-Sigma}
    Let $A,B$ be bounded subsets of a Banach space $X$. Then:
    \begin{enumerate}
        \item $\sigma(A) = 0$ if and only if $A$ is relatively super weakly compact;
        \item $\sigma(A+B) \leq \sigma(A) + \sigma(B)$;
        \item $\sigma(A) \leq \operatorname{diam} A$.
    \end{enumerate}
\end{prop}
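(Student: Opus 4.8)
The plan is to treat the three items essentially separately, using for (1) the ultrapower description of relative super weak compactness together with Grothendieck's classical characterization of relative weak compactness, and for (2) that same ultrapower picture combined with K.~Tu's theorem that the closed convex hull of a relatively super weakly compact set is super weakly compact.

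Item (3) is the quickest: assuming $A \neq \emptyset$, fix $a_0 \in A$ and note $A \subseteq \{a_0\} + (\operatorname{diam} A)B_X$; the singleton $\{a_0\}$ is norm compact, hence super weakly compact, so $\sigma(A) \leq \operatorname{diam} A$.

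For (1), the implication "$A$ relatively super weakly compact $\Rightarrow \sigma(A)=0$" follows at once from Tu's theorem: $K := \overline{\operatorname{conv}}\,A$ is then super weakly compact and $A \subseteq K \subseteq K + tB_X$ for every $t>0$. For the converse, fix a free ultrafilter $\mathcal{U}$ on $\N$; by Grothendieck's characterization it is enough to produce, for each $\delta > 0$, a weakly compact $L \subseteq X^{\mathcal{U}}$ with $A^{\mathcal{U}} \subseteq L + \delta B_{X^{\mathcal{U}}}$. Given $\delta$, pick $n$ with $1/n \leq \delta$ and a super weakly compact $K_n \subseteq X$ with $A \subseteq K_n + \tfrac1n B_X$; writing each $x_m \in A$ as $k_m + r_m$ with $k_m \in K_n$, $\|r_m\| \leq \tfrac1n$, and using that the norm on $X^{\mathcal{U}}$ is given by $\lim_{\mathcal{U}}$, one obtains $A^{\mathcal{U}} \subseteq K_n^{\mathcal{U}} + \tfrac1n B_{X^{\mathcal{U}}}$, with $K_n^{\mathcal{U}}$ weakly compact since $K_n$ is super weakly compact. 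This gives (1).

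For (2), fix $\varepsilon > 0$ and choose super weakly compact $K_1, K_2 \subseteq X$ with $A \subseteq K_1 + (\sigma(A)+\varepsilon)B_X$ and $B \subseteq K_2 + (\sigma(B)+\varepsilon)B_X$. The main point is that $K_1 + K_2$ is relatively super weakly compact: for every free ultrafilter $\mathcal{U}$ one has $(K_1+K_2)^{\mathcal{U}} = K_1^{\mathcal{U}} + K_2^{\mathcal{U}}$, the sum of two weakly compact subsets of $X^{\mathcal{U}}$, hence weakly compact. By Tu's theorem $K := \overline{\operatorname{conv}}(K_1+K_2)$ is super weakly compact, and since $A+B \subseteq K_1+K_2+(\sigma(A)+\sigma(B)+2\varepsilon)B_X \subseteq K + (\sigma(A)+\sigma(B)+2\varepsilon)B_X$, we get $\sigma(A+B) \leq \sigma(A)+\sigma(B)+2\varepsilon$; letting $\varepsilon \downarrow 0$ concludes. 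No step looks problematic; the only care needed is bookkeeping with the ultrapower norm identity $\|[(x_m)]\|_{X^{\mathcal{U}}} = \lim_{\mathcal{U}}\|x_m\|$ and the elementary facts $(C+D)^{\mathcal{U}} = C^{\mathcal{U}}+D^{\mathcal{U}}$ and "the sum of two weakly compact sets is weakly compact".
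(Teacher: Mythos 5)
Your proof is correct, but it is organised quite differently from the paper's: for items (1) and (2) the paper simply cites Theorem 4.3 of Tu's paper \cite{Kun2021}, and only item (3) is argued directly there (by exactly your singleton observation $A \subseteq \{a\} + (\operatorname{diam} A)B_X$). What you supply instead is a self-contained verification of (1) and (2) from first principles: Grothendieck's lemma applied inside the ultrapower, the identities $\|[(x_m)]\|_{X^{\mathcal{U}}} = \lim_{\mathcal{U}}\|x_m\|$ and $(C+D)^{\mathcal{U}} = C^{\mathcal{U}} + D^{\mathcal{U}}$ for bounded $C,D$, weak compactness of sums of weakly compact sets, and Tu's convex-hull theorem to pass from ``relatively super weakly compact'' back to a genuinely super weakly compact set $K$ as required by the definition of $\sigma$. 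All of these steps check out (the decomposition $x_m = k_m + r_m$ in (1) and the passage to $\overline{\operatorname{conv}}(K_1+K_2)$ in (2) are exactly the right places to be careful, and you handle them correctly). The trade-off is the usual one: the paper's citation keeps the exposition short and defers to the source where $\sigma$ was introduced, while your argument makes the proposition independent of \cite{Kun2021} except for the convex-hull theorem, and makes visible that (1) and (2) are really just Grothendieck's characterization transplanted to ultrapowers. Either is acceptable; if you wanted to trim your version further, the forward direction of (1) does not even need Tu's theorem in full force, only that some super weakly compact set contains $A$ up to $tB_X$ for every $t>0$.
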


\begin{proof}
    The first two properties are shown in Theorem 4.3. of \cite{Kun2021}. For the third property note that $A \subseteq \{a\} + (\operatorname{diam}A) \cdot B_X$ for any $a \in A$, and that every singleton is clearly super weakly compact.
\end{proof}

\section{Property (R)}

The following property is inspired by a well known result of Rosenthal \cite{Rosenthal1973}: \textit{Any subspace of $L_1(\mu)$ either contains $\ell_1$ or embeds in a superreflexive space}. We consider a localised version of this property for sets instead of subspaces.

\begin{definition}
    We say that a Banach space $X$ has \textit{property (R)} if every weakly precompact set in $X$ is relatively super weakly compact.
\end{definition}

Hence, in Banach spaces satisfying property (R), the only obstacle for relative super weak compactness comes from the presence of the canonical basis of $\ell_1$. In view of Propositions \ref{Proposition-WSC-char} and \ref{Proposition-Schur-char}, property (R) can be considered as an intermediate property stronger than weak sequential completeness but weaker than the Schur property.

\begin{prop} \label{Proposition-R-char}
    Let $X$ be a Banach space. Then $X$ has property (R) if and only if $X$ is weakly sequentially complete and every weakly compact subset of $X$ is super weakly compact.
\end{prop}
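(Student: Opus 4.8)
The plan is to prove both implications directly from the definitions, using Propositions \ref{Proposition-WSC-char} and \ref{Proposition-Schur-char} as the organizing principle.

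For the forward implication, suppose $X$ has property (R). First I would note that every relatively super weakly compact set is relatively weakly compact, so every weakly precompact subset of $X$ is relatively weakly compact; by the equivalence \textit{(1)} $\Leftrightarrow$ \textit{(3)} of Proposition \ref{Proposition-WSC-char}, $X$ is weakly sequentially complete. Next, let $A$ be a weakly compact subset of $X$. Being weakly compact, $A$ is in particular bounded and contains no sequence equivalent to the $\ell_1$ basis (such a sequence has no weakly convergent subsequence, contradicting Eberlein–Šmulyan), hence $A$ is weakly precompact. By property (R), $A$ is relatively super weakly compact; since $A$ is also weakly closed, it is super weakly compact.

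For the converse, suppose $X$ is weakly sequentially complete and every weakly compact subset of $X$ is super weakly compact. Let $A \subseteq X$ be weakly precompact. By Proposition \ref{Proposition-WSC-char}, \textit{(1)} $\Leftrightarrow$ \textit{(3)}, weak sequential completeness gives that $A$ is relatively weakly compact, so $\overline{A}^{w}$ is weakly compact. By hypothesis $\overline{A}^{w}$ is super weakly compact, hence relatively super weakly compact; since $A \subseteq \overline{A}^{w}$, the set $A$ is relatively super weakly compact as well. Thus $X$ has property (R).

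There is no serious obstacle here: the proof is a bookkeeping exercise assembling facts already recorded in the excerpt (the chain ``relatively compact $\Rightarrow$ relatively super weakly compact $\Rightarrow$ relatively weakly compact'', Rosenthal's $\ell_1$ theorem via Proposition \ref{Proposition-WSC-char}, and Eberlein–Šmulyan). The only point requiring minor care is making sure one passes correctly between a set and its weak closure when invoking the definitions of ``super weakly compact'' (weakly closed) versus ``relatively super weakly compact''.
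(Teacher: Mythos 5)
Your proof is correct and follows essentially the same route as the paper, which simply records the statement as an immediate consequence of the definitions and Proposition \ref{Proposition-WSC-char}; you have merely written out the details (including the careful passage between a set and its weak closure) that the paper leaves implicit.
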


\begin{proof}
    This follows immediately from the definition and Proposition \ref{Proposition-WSC-char}.
\end{proof}

\begin{example}
    The following Banach spaces have property (R): superreflexive spaces; Schur spaces; $L_1(\mu)$; $C(K)^*$; preduals of JBW$^*$ triples.
\end{example}

\begin{proof}
    In superreflexive spaces every bounded set is relatively super weakly compact, so they clearly have property (R). On the other hand, in a Schur space every weakly precompact set is relatively norm compact, and thus relatively super weakly compact, so Schur spaces have property (R). The other cases follow from Proposition 6.1. of \cite{LancienRaja2022} and weak sequential completeness of the given space.
\end{proof}

Note that superreflexive spaces are, in a sense, as far from Schur spaces as possible. It could be reasoned that superreflexive spaces enjoy property (R) because it is easy for a subset to be relatively super weakly compact. On the other hand, Schur spaces enjoy property (R) because it is hard for a subset to not contain a sequence equivalent to the canonical $\ell_1$ basis.

In the next proposition we point out some properties of spaces with property (R). We say that a subset $A$ of a Banach space $X$ \textit{contains $\ell_1^N$ bases uniformly} if there is $C > 0$ such that $A$ contains $C$-equivalent copy of the canonical basis of $\ell_1^N$ for every $N \in \N$. It is a well known result by Pisier that $X$ contains $\ell_1^N$ bases uniformly if and only if $X$ has trivial type (see e.g. Theorem 13.3. of \cite{DiestelJarchowTonge1995}).

\begin{prop} \label{Proposition-Properties-R}
    Let $X$ be a Banach space with property (R). Then each of the following holds true:
    \begin{enumerate}
        \item A bounded subset of $X$ contains $\ell_1^N$ bases uniformly if and only if it contains the canonical basis of $\ell_1$.
        \item Any subspace of $X$ with nontrivial type is superreflexive.
    \end{enumerate}
\end{prop}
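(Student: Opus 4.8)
The plan is to deduce both statements from the characterization of property (R) together with Pisier's theorem on trivial type. For part (1), the forward implication is the only one requiring argument: if a bounded set $A \subseteq X$ does not contain the canonical basis of $\ell_1$, then $A$ is weakly precompact, hence relatively super weakly compact by property (R). A relatively super weakly compact set cannot contain $\ell_1^N$ bases uniformly, because the quantity $\sigma$ vanishes on it (Proposition \ref{Proposition-Sigma}(1)) while containing $\ell_1^N$'s $C$-uniformly would force a lower bound on some measure of non-super-weak-compactness; more directly, one can invoke the tree/finite-dentability characterization from Proposition \ref{Proposition-SWCC-char} applied to $\overline{\operatorname{conv}}(A \cup -A)$, which is super weakly compact by Tu's theorem, and observe that uniform $\ell_1^N$ bases produce exactly the configuration of vectors and functionals forbidden by point (7) (with $\theta$ comparable to $1/C$). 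The reverse implication is trivial since the canonical $\ell_1$ basis restricted to its first $N$ coordinates is isometrically the $\ell_1^N$ basis.

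For part (2), let $Y \subseteq X$ be a closed subspace with nontrivial type. By Pisier's theorem (Theorem 13.3 of \cite{DiestelJarchowTonge1995}), $Y$ does not contain $\ell_1^N$ bases uniformly, so in particular $B_Y$ does not contain $\ell_1^N$ bases uniformly as a subset of $X$. By part (1) applied in $X$, $B_Y$ does not contain the canonical basis of $\ell_1$, hence $B_Y$ is weakly precompact in $X$, hence relatively super weakly compact in $X$ by property (R). Since super weak compactness of a set is inherited by subspaces containing it (the ultrapower $Y^{\mathcal{U}}$ sits isometrically inside $X^{\mathcal{U}}$, and weak compactness of $(B_Y)^{\mathcal{U}}$ in $X^{\mathcal{U}}$ implies weak compactness in the closed subspace $Y^{\mathcal{U}}$), $B_Y$ is relatively super weakly compact in $Y$; being also closed and convex, $B_Y$ is super weakly compact in $Y$. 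But a Banach space whose closed unit ball is super weakly compact is superreflexive, so $Y$ is superreflexive.

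The main obstacle I anticipate is making precise the claim in part (1) that a relatively super weakly compact set cannot contain $\ell_1^N$ bases uniformly — one must either quote a suitable stability property of $\sigma$ under passing to uniform finite-dimensional $\ell_1$ structures, or carefully set up the functionals witnessing the failure of Proposition \ref{Proposition-SWCC-char}(7) on $\overline{\operatorname{conv}}(A \cup -A \cup \{0\})$. Everything else (weak precompactness from absence of $\ell_1$ via Rosenthal, the ultrapower inheritance of weak compactness by subspaces, and the superreflexivity criterion $B_X$ super weakly compact) is either already stated in the excerpt or entirely routine.
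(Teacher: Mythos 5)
Your proposal is correct and follows essentially the same route as the paper, which simply notes that a set containing $\ell_1^N$ bases uniformly cannot be relatively super weakly compact and then invokes Pisier's theorem for part (2). The details you supply — witnessing the configuration of Proposition \ref{Proposition-SWCC-char}(7) with $\theta$ comparable to $1/C$ via summing-type functionals, and the ultrapower argument showing that relative super weak compactness of $B_Y$ in $X$ passes to $Y$ — are exactly the facts the paper leaves implicit, and they check out.
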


\begin{proof}
    The point \textit{(1)} follows as any set that contains $\ell_1^N$ bases uniformly is not relatively super weakly compact. The point \textit{(2)} follows from \textit{(1)} and the above-mentioned result of Pisier.
\end{proof}

Recall that a bounded subset $A$ of a Banach space $X$ is a \textit{(weak) Banach-Saks set} if every (resp. every weakly convergent) sequence in $A$ has Cesaro summable\footnote{Recall that a sequence $(x_n)_{n \in \N}$ is Cesaro summable if and only if the sequence $(n^{-1} \sum_{k=1}^n x_k)_{n \in \N}$ of its arithmetic means is norm convergent} subsequence. We say that a Banach space $X$ has the \textit{(weak) Banach-Saks} property if $B_X$ is a (weak) Banach-Saks set.

\begin{prop} \label{Proposition-R-WeakBS}
    Let $X$ be a Banach space in which every relatively weakly compact set is relatively super weakly compact (e.g. a Banach space with property (R)). Then $X$ has the weak Banach-Saks property.
\end{prop}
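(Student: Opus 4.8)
The plan is to show that any weakly convergent sequence in $X$ has a Cesàro-summable subsequence, using the hypothesis to upgrade its weak compactness to super weak compactness and then invoking the Banach–Saks property of super weakly compact sets.

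First I would take a weakly convergent sequence $(x_n)_{n \in \N}$ in $B_X$, say $x_n \to x$ weakly. The set $A = \{x_n : n \in \N\} \cup \{x\}$ is then relatively weakly compact, hence by hypothesis relatively super weakly compact; its closed convex hull $\overline{\operatorname{conv}}(A)$ is therefore super weakly compact by the result of Tu \cite{Kun2021} quoted in Subsection 1.3 (the closed convex hull of a relatively super weakly compact set is super weakly compact). Now I would apply Proposition \ref{Proposition-SWC-char}(2): a super weakly compact set has the Banach–Saks property, meaning every sequence in it has a Cesàro-summable subsequence. Since $(x_n)_{n \in \N}$ lies in the super weakly compact set $\overline{\operatorname{conv}}(A)$, it has a subsequence whose arithmetic means converge in norm. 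As this works for every weakly convergent sequence in $B_X$, the set $B_X$ is a weak Banach–Saks set, i.e. $X$ has the weak Banach–Saks property.

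I would add the parenthetical remark that a Banach space with property (R) satisfies the hypothesis: by Proposition \ref{Proposition-R-char} such an $X$ is weakly sequentially complete, so every relatively weakly compact set is weakly precompact (indeed any bounded set in a weakly sequentially complete space that is relatively weakly compact is trivially weakly precompact, since relatively weakly compact sets cannot contain $\ell_1$-sequences), and property (R) then makes it relatively super weakly compact — although it is cleaner to simply observe that property (R) says weakly precompact implies relatively super weakly compact, and relatively weakly compact sets are weakly precompact by Proposition \ref{Proposition-WSC-char}.

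There is essentially no serious obstacle here: the argument is a direct chain of citations. The only point requiring mild care is making sure the Banach–Saks property in Proposition \ref{Proposition-SWC-char}(2), stated for the super weakly compact \emph{set} $A$ itself, applies to sequences lying in that set — which it does by the definition of a Banach–Saks set recalled just before Proposition \ref{Proposition-R-WeakBS}. So the "hard part" is merely bookkeeping: passing from the sequence to a super weakly compact set containing it (via closed convex hull and Tu's theorem) so that the set-theoretic Banach–Saks statement can be invoked.
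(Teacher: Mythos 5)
Your proof is correct and follows essentially the same route as the paper: upgrade the relatively weakly compact set of sequence elements to a relatively super weakly compact one via the hypothesis, then invoke Proposition \ref{Proposition-SWC-char}(2) to get a Cesàro-summable subsequence. Your extra step of passing to the closed convex hull via Tu's theorem to obtain a genuinely super weakly compact set before applying Proposition \ref{Proposition-SWC-char} is a slightly more careful version of what the paper does directly, but it is the same argument.
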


\begin{proof}
    Let $(x_n)_{n \in \N}$ be a weakly convergent sequence in $X$. Then the set $\{x_n: n \in \N\}$ is relatively weakly compact and, by the assumption, relatively super weakly compact. We now appeal to Proposition \ref{Proposition-SWC-char} \textit{(2)} and see that $\{x_n : n \in \N\}$ is a Banach-Saks set, which implies that $(x_n)_{n \in \N}$ has a Cesaro summable subsequence.
\end{proof}

Immediately we get the following corollary.

\begin{corollary} \label{Corollary-R}
    Let $X$ be a Banach space with property (R). Then none of the following spaces can embed in $X$.
    \begin{enumerate}
        \item $c_0$, non-reflexive quasi-reflexive Banach spaces;
        \item the Schreier space and the Baernstein space;
        \item $(\bigoplus_{N=1}^\infty \ell_1^N)_{\ell_p}$, $(\bigoplus_{N=1}^\infty \ell_\infty^N)_{\ell_p}$, $p > 1$.
    \end{enumerate}
\end{corollary}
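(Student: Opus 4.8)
The plan is to read off each case from properties already established for $X$: by Proposition~\ref{Proposition-R-char} the space $X$ is weakly sequentially complete, by Proposition~\ref{Proposition-R-WeakBS} it has the weak Banach--Saks property, and Proposition~\ref{Proposition-Properties-R}(1) applies to its bounded subsets. Both weak sequential completeness and the weak Banach--Saks property pass to closed subspaces (a weakly Cauchy, resp.\ weakly convergent, sequence in a subspace is such in the whole space, and norm convergence is inherited), so it suffices to check that none of the listed spaces can itself be weakly sequentially complete, have the weak Banach--Saks property, or satisfy the conclusion of Proposition~\ref{Proposition-Properties-R}(1), as appropriate.

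For (1) I would use the observation that a weakly sequentially complete space containing no copy of $\ell_1$ is reflexive: by Rosenthal's $\ell_1$ theorem every bounded sequence then has a weakly Cauchy, hence weakly convergent, subsequence, so the unit ball is weakly sequentially compact and one concludes via Eberlein--\v{S}mulyan. Since $c_0$ is non-reflexive and contains no $\ell_1$, it is not weakly sequentially complete; and a non-reflexive quasi-reflexive space is non-reflexive and contains no $\ell_1$ (subspaces of quasi-reflexive spaces are quasi-reflexive, while $\ell_1$ is not), so it too fails weak sequential completeness. Hence neither embeds in $X$. For (2), both the Schreier space and the Baernstein space carry a weakly null sequence with no Ces\`aro-summable subsequence --- the classical reason these spaces were introduced --- so they fail the weak Banach--Saks property and cannot embed in $X$.

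For (3), each of $Z=(\bigoplus_{N=1}^\infty \ell_1^N)_{\ell_p}$ and $Z=(\bigoplus_{N=1}^\infty \ell_\infty^N)_{\ell_p}$ with $p>1$ is reflexive, being an $\ell_p$-sum of finite-dimensional spaces, yet contains $\ell_1^N$-bases uniformly --- directly in the first case, and through the standard embedding of $\ell_1^{c\log N}$ into $\ell_\infty^N$ in the second --- so it has trivial type and is not superreflexive. If such a $Z$ embedded in $X$, the image of $B_Z$ would be a bounded subset of $X$ containing $\ell_1^N$-bases uniformly, whence, by Proposition~\ref{Proposition-Properties-R}(1), it would contain the canonical basis of $\ell_1$; but $Z$ is reflexive, so it contains no copy of $\ell_1$, a contradiction. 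The only genuinely non-formal inputs are the classical structural facts invoked along the way --- the behaviour of quasi-reflexive spaces with respect to $\ell_1$, the failure of the weak Banach--Saks property for the Schreier and Baernstein spaces, and the type/reflexivity of the $\ell_p$-sums in (3) --- and I expect pinning down these citations, rather than any argument proper, to be the main point to get right.
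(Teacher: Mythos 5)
Your proposal is correct and follows exactly the paper's route: case (1) via weak sequential completeness (Proposition \ref{Proposition-R-char}), case (2) via the weak Banach--Saks property (Proposition \ref{Proposition-R-WeakBS}), and case (3) via Proposition \ref{Proposition-Properties-R}(1) applied to reflexive spaces containing $\ell_1^N$ bases uniformly. The paper states these reductions in one line each; you have merely supplied the standard classical facts it leaves implicit.
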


\begin{proof}
    All these items follow from Propositions \ref{Proposition-Properties-R} and \ref{Proposition-R-WeakBS}.
    The first case follows as these spaces are not weakly sequentially complete. The second case follows as these spaces fail the weak Banach-Saks property. The case of the $\ell_p$ sums follows from Proposition \ref{Proposition-Properties-R} as these spaces contain $\ell_1^N$ bases uniformly but do not contain $\ell_1$ as they are reflexive. 
\end{proof}

\begin{remark}
    Note that $X = (\bigoplus_{N=1}^\infty \ell_\infty^N)_{\ell_1}$ is a Schur space (as is any $\ell_1$ sum of finite dimensional spaces), and thus has property (R). Further, $X$ contains $\ell_\infty^N$'s isometrically, and so every Banach space is finitely representable in $X$ (as every Banach space is finitely representable in $c_0$ \cite[Example 11.1.2]{nigel2006topics} and $c_0$ is clearly finitely representable in every Banach space which contains isometric copies of $\ell_\infty^N$'s). It follows that property (R) does not imply any nontrivial super-property.
\end{remark}

\section{Property (R) in Lipschitz-free spaces over superreflexive spaces}

In this section we will prove that Lipschitz-free spaces over superreflexive spaces enjoy property (R). For this we will adapt a method by Bourgain \cite{Bourgain1983,Bourgain1984}, which was modified and used in \cite{CuthDouchaWojtasczyk2016} to prove weak sequential completeness of Lipschitz-free spaces over $\R^n$, and further modified in \cite{KochanekPernecka} to extend the result to Lipschitz-free spaces over compact subsets of superreflexive spaces. The main ingredient of our proof is the observation that a certain property, which was used in an essential way in proofs of both \cite{CuthDouchaWojtasczyk2016,KochanekPernecka}, is satisfied not only for subsets that are not relatively weakly compact, but also by subsets which are not relatively super weakly compact. Hence, we will be able to strengthen the results and get relative super weak compactness instead of just relative weak compactness in the end. Later in this section, we will prove an analogue of compact reduction from \cite{AliagaCompRed} for property (R), which will allow us to get the result for superreflexive spaces instead of compact subsets of superreflexive spaces.

Before proceeding, let us recall the definition of WUC series.

\begin{definition}
    Let $(x_n)_{n \in \N}$ be a sequence in a Banach space $X$. We say that the formal series $\sum_{n=1}^\infty x_n$ is \textit{WUC (weakly unconditionally Cauchy)}, if for every $x^* \in X^*$ the series of scalars $\sum_{n=1}^\infty |x^*(x_n)|$ is convergent.
\end{definition}

The following proposition is a modification of Theorem 9 of \cite{KochanekPernecka}.
Since, except for the very first step, the proof of our proposition is identical to the proof of Theorem 9 of \cite{KochanekPernecka}, we will present only the first part and refer the reader to \cite{KochanekPernecka} for the rest of the proof.

\begin{prop} \label{Prop-KP}
    Let $M$ be a compact subset of a superreflexive space $X$. Suppose that $\Gamma \subseteq \F(M)$ is closed convex bounded and not super weakly compact. Then there is a WUC series $\sum_{k=1}^\infty \rho_k$ in $\operatorname{Lip}_0(M)$ such that
    \begin{align*}
        \limsup_{k \rightarrow \infty} \sup \{|\langle \mu, \rho_k \rangle|: \mu \in \Gamma\} > 0.
    \end{align*}
\end{prop}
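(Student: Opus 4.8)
The plan is to follow the strategy of Bourgain as implemented by Kochanek and Pernecká in Theorem 9 of \cite{KochanekPernecka}, replacing the hypothesis ``$\Gamma$ is not relatively weakly compact'' by the weaker hypothesis ``$\Gamma$ is not super weakly compact'' and extracting the additional combinatorial information this gives. Recall that the proof in \cite{KochanekPernecka} starts by using the failure of relative weak compactness, together with Grothendieck's characterization, to produce a $\theta > 0$ and sequences witnessing a ``$\ell_1^N$-like'' oscillation pattern: for every $N$ one finds finitely many measures in $\Gamma$ and finitely many functionals in $B_{\operatorname{Lip}_0(M)}$ (or rather points of $M$, via differences of evaluation functionals normalized appropriately) arranged in the triangular pattern $f^N_k(x^N_j) = 0$ for $k > j$ and $f^N_k(x^N_j) = \theta$ for $k \le j$. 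This is exactly condition \textit{(7)} of Proposition \ref{Proposition-SWCC-char}, negated. So the first — and only — step I would change is: instead of invoking Grothendieck's theorem on the non-(relatively-)weakly-compact set, I invoke the equivalence \textit{(1)} $\Leftrightarrow$ \textit{(7)} of Proposition \ref{Proposition-SWCC-char} on the non-super-weakly-compact closed convex bounded set $\Gamma$. This yields precisely the same triangular configuration of measures $\mu^N_k \in \Gamma$ and functionals $g^N_k \in B_{\operatorname{Lip}_0(M)}$ with $g^N_k(\mu^N_j) = 0$ for $k > j$ and $g^N_k(\mu^N_j) = \theta$ for $k \le j$, with the uniformity in $N$ that Bourgain's argument requires.

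From that point on, the construction is verbatim that of \cite{KochanekPernecka}: one uses the compactness of $M$ and an exhaustion/gliding-hump argument over finer and finer finite $\varepsilon$-nets of $M$ to pass from the Lipschitz functionals $g^N_k$ to a genuine WUC series $\sum_k \rho_k$ in $\operatorname{Lip}_0(M)$, arranged so that infinitely many $\rho_k$ have $\sup\{|\langle \mu, \rho_k\rangle| : \mu \in \Gamma\}$ bounded below by a fixed positive multiple of $\theta$. The WUC property is extracted from the fact that, against any fixed $\mu \in \mathcal{F}(M)$ (equivalently, against the corresponding functional when we dualize), the triangular pattern forces the partial sums $\sum_k \pm \langle \mu, \rho_k\rangle$ to stay bounded, which is exactly Bourgain's dichotomy: either the series is WUC or $\Gamma$ contains an $\ell_1$-basis inside a weakly compact set, which is impossible. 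Since nothing in this portion of the argument used weak compactness \emph{as such} — it used only the negated form of condition \textit{(7)} — the rest of the proof transfers with no changes, and I would simply cite \cite{KochanekPernecka} for the details, exactly as the statement above announces.

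The main (and essentially only) obstacle is the first step, namely checking that condition \textit{(7)} of Proposition \ref{Proposition-SWCC-char} really does supply the \emph{same shape} of input data that Bourgain's argument consumes. In the weakly-compact case one extracts an infinite sequence $(f_k, x_j)$ with the triangular oscillation, whereas condition \textit{(7)} gives, for each $N$, only a length-$N$ triangular array, with no coherence between different values of $N$. One must check that the Kochanek–Pernecká construction is genuinely finitary in its appetite — that it processes the data level by level (length $N$ at stage $N$) rather than requiring a single infinite sequence up front — and that the uniform constant $\theta$ is what propagates through. In \cite{KochanekPernecka} the relevant sequences are in fact built by a diagonal procedure over an increasing sequence of finite nets of $M$, so finite arrays of growing length are precisely what is needed; this is why the substitution is legitimate. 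Once this compatibility is confirmed, the conclusion $\limsup_k \sup\{|\langle \mu, \rho_k\rangle| : \mu \in \Gamma\} > 0$ follows as in \cite{KochanekPernecka}, completing the proof.
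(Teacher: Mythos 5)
Your proposal is correct and follows essentially the same route as the paper: the only change to the Kochanek--Pernecká argument is to replace the weak-compactness input by the equivalence \textit{(1)} $\Leftrightarrow$ \textit{(7)} of Proposition \ref{Proposition-SWCC-char} applied to the closed convex bounded non-super-weakly-compact set $\Gamma$, which supplies exactly the finite triangular arrays $(x^n_k)_{k\le n}\subseteq\Gamma$, $(f^n_k)_{k\le n}\subseteq B_{\operatorname{Lip}_0(M)}$ that the rest of the (genuinely finitary) construction consumes. The paper additionally records a small perturbation of the $x^n_k$ into $\operatorname{span}\{\delta(p):p\in M\}$ and the resulting relaxed conditions, but this is a routine density argument and does not change the substance.
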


\begin{proof}
    Following the proof of Theorem 9 of \cite{KochanekPernecka} we can assume, possibly by renorming $X$ and passing to a subspace, that $X$ is $p$-smooth for some $p \in (1,2]$ and that $X$ is separable. Further, by translating the set $M$, we can suppose that $0 \in M$.
    
    It was shown in the proof of Theorem 9 of \cite{KochanekPernecka} that if we can find $C \geq 1$, $\xi > 0$ and for $n \in \N$ sequences
    \begin{align*}
        &(f^n_j)_{j \leq n} \subseteq \operatorname{Lip}_0(M) \\
        &(\mu^n_j)_{j \leq n} \subseteq \operatorname{span} \{\delta(p) : p \in M\}
    \end{align*}
    such that the following conditions are satisfied:
    \begin{enumerate}
        \item $\norm{\mu^n_j}_\mathcal{F} \leq C$ for $n \in \N$ and $j \leq n$;
        \item $\norm{f^n_j}_{\operatorname{Lip}} \leq 1$ for $n \in \N$ and $j \leq n$;
        \item $|\langle \mu^n_j,f^n_k \rangle| \leq \frac{\xi}{3}$ for $n \in \N$ and $j < k \leq n$;
        \item $\langle \mu^n_j,f^n_k \rangle \geq \xi$ for $n \in \N$ and $k \leq j \leq n$;
        \item $\operatorname{dist}(\mu^n_j,\Gamma) \leq \frac{6C\xi}{\xi + 48C}$ for $n \in \N$ and $j \leq n$,
    \end{enumerate}
    then there is a WUC series $\sum_{k=1}^\infty \rho_k$ in $\operatorname{Lip}_0(M)$ with the required property. We are going to show how to find such $(f^n_j)_{j \leq n}$ and $(\mu^n_j)_{j \leq n}$ and refer the reader to Theorem 9 of \cite{KochanekPernecka} for the rest of the proof. Let us stress that the rest of the proof of Theorem 9 of \cite{KochanekPernecka} does not use that $\Gamma$ is not relatively weakly compact.
    
    So, suppose that $\Gamma$ is as above. We can use Theorem 3.8. of \cite{LancienRaja2022} (that is the equivalence \textit{(1)} $\Leftrightarrow$ \textit{(7)} of our Proposition \ref{Proposition-SWCC-char}) to find $\theta > 0$ and for every $n \in \N$ finite sequences $(x^n_k)_{k \leq n} \subseteq \Gamma$ and $(f^n_k)_{k \leq n} \subseteq B_{\operatorname{Lip}_0(M)}$ such that $f^n_j (x^n_k) = 0$ for $j > k$ and $f^n_j (x^n_k) = \theta$ for $j \leq k$.
    
    Let $C = 1 + \sup \{\norm{\gamma}_\mathcal{F}: \gamma \in \Gamma\}$ and $0 < \xi < \theta$. Fix $\epsilon > 0$ small enough such that
    $$\epsilon < \min \left\{\frac{6C\xi}{\xi + 48C}, \frac{\xi}{3}, \theta - \xi, 1 \right\}.$$
    We can use density of $\operatorname{span} \{\delta(p) : p \in M\}$ in $\mathcal{F}(M)$ to find, for each $n \in \N$ and $k \leq n$, an element $\mu^n_k \in \operatorname{span} \{\delta(p) : p \in M\}$ such that $\norm{\mu^n_k - x^n_k}_{\mathcal{F}} \leq \epsilon$. The required properties (1)-(5) then clearly follow from the properties of $x^n_k$'s and the choice of $\epsilon$.
\end{proof}

\begin{theorem} \label{Theorem-Comp-R}
    Let $M$ be a compact subset of a superreflexive space $X$. Then the Lipschitz-free space $\mathcal{F}(M)$ has property (R).
\end{theorem}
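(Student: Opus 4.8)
The plan is to combine Proposition~\ref{Prop-KP} with a standard WUC-series argument to show that every weakly precompact subset of $\mathcal{F}(M)$ is relatively super weakly compact. By Proposition~\ref{Proposition-R-char} it would in fact suffice to prove that $\mathcal{F}(M)$ is weakly sequentially complete (which is known) and that every weakly compact subset is super weakly compact; but it is cleaner to argue directly. So let $A \subseteq \mathcal{F}(M)$ be weakly precompact and suppose, for contradiction, that $A$ is not relatively super weakly compact. Since relative super weak compactness is preserved under passing to the closed convex hull (K.~Tu, \cite{Kun2021}), the set $\Gamma = \overline{\operatorname{conv}}(A)$ is a closed convex bounded subset of $\mathcal{F}(M)$ that is not super weakly compact. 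Apply Proposition~\ref{Prop-KP} to obtain a WUC series $\sum_{k=1}^\infty \rho_k$ in $\operatorname{Lip}_0(M)$ with $\limsup_k \sup\{|\langle \mu, \rho_k\rangle| : \mu \in \Gamma\} > 0$; in particular, after passing to a subsequence, there is $\eta > 0$ and elements $\mu_k \in \Gamma$ (hence, after a further small perturbation and convexity argument, elements essentially in $A$) with $|\langle \mu_k, \rho_k \rangle| \geq \eta$ for all $k$.

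The next step is to derive a copy of the $\ell_1$ basis inside $A$, contradicting weak precompactness. Here I would use the classical duality between WUC series and operators into $c_0$: the assignment $T\colon \mathcal{F}(M) \to c_0$, $T\mu = (\langle \mu, \rho_k\rangle)_{k}$, is a well-defined bounded linear operator precisely because $\sum \rho_k$ is WUC (this is the Bessaga--Pe{\l}czy{\'n}ski/Orlicz characterization; see \cite{DiestelJarchowTonge1995}). We have produced a bounded sequence $(\mu_k)$ in $\mathcal{F}(M)$ whose images $T\mu_k$ satisfy $|(T\mu_k)_k| \geq \eta$. By a standard gliding-hump argument on $c_0$ — passing to a subsequence so that the ``diagonal'' coordinates dominate and the tails are small — one extracts a subsequence of $(T\mu_k)$ equivalent to the $\ell_1$ basis, or more directly one finds a subsequence of $(\mu_k)$ that is sent by $T$ onto something equivalent to the $\ell_1$ basis. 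As recalled in the excerpt, if a seminormalized sequence maps onto the $\ell_1$ basis under a bounded linear operator, the sequence itself must be equivalent to the $\ell_1$ basis. Since $(\mu_k)$ lies (up to arbitrarily small perturbation) in $\operatorname{conv}(A)$, and weakly precompact sets are stable under convex hulls and under small perturbations, this contradicts the weak precompactness of $A$.

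To pass from $\operatorname{conv}(A)$ back to $A$ I would be slightly careful: the $\mu_k$ produced above lie in $\Gamma = \overline{\operatorname{conv}}(A)$, so first approximate each $\mu_k$ within $\eta/3$ (say) by a finite convex combination $\nu_k \in \operatorname{conv}(A)$, keeping $|\langle \nu_k, \rho_k\rangle| \geq \eta/2$; then the argument of the previous paragraph produces an $\ell_1$-sequence inside $\operatorname{conv}(A)$, and since $\operatorname{conv}(A)$ is weakly precompact whenever $A$ is (Rosenthal, \cite{Rosenthal1977}), we still reach a contradiction. This proves that $A$ is relatively super weakly compact, i.e. $\mathcal{F}(M)$ has property (R).

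The main obstacle I anticipate is not in this deduction but is already absorbed into Proposition~\ref{Prop-KP}: the real content is that a closed convex bounded set failing super weak compactness still produces a WUC series witnessing non-smallness, which is exactly where the equivalence \textit{(1)}~$\Leftrightarrow$~\textit{(7)} of Proposition~\ref{Proposition-SWCC-char} (the Kun Tu / Lancien--Raja characterization) replaces the Rosenthal $\ell_1$-theorem input of \cite{KochanekPernecka}. Granting that proposition, the only delicate point here is verifying that a WUC series together with a sequence on which it is uniformly bounded below genuinely yields an $\ell_1$-sequence; this is a routine but non-trivial perturbation/gliding-hump argument in $c_0$, and I would present it with explicit estimates rather than citing it as folklore.
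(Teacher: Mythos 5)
Your overall strategy is the same as the paper's: argue by contraposition, pass to $\Gamma=\overline{\operatorname{conv}}(A)$ using Tu's theorem that relative super weak compactness survives closed convex hulls, invoke Proposition~\ref{Prop-KP} to produce the WUC series, and then use the coordinate operator $T\mu=(\langle\mu,\rho_k\rangle)_k$ to contradict weak precompactness. The gap is in the final step: you send $T$ into $c_0$ and propose a gliding-hump argument there, but a gliding hump in $c_0$ yields a sequence equivalent to the canonical basis of $c_0$, not of $\ell_1$. Concretely, everything you record about $(T\mu_k)$ --- boundedness and $|(T\mu_k)_k|\geq\eta$ --- is consistent with $T\mu_k=\eta e_k$, which in $c_0$ is weakly null and has no subsequence equivalent to the $\ell_1$ basis; so no contradiction with the weak precompactness of $(\mu_k)$ can be extracted from the $c_0$ picture. (Mere coordinatewise information plus a uniform bound cannot rule out this configuration.)

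The fix is to use the full strength of the WUC hypothesis: since $\sum_k\rho_k$ is WUC in $\operatorname{Lip}_0(M)$ and $\mathcal{F}(M)\subseteq\operatorname{Lip}_0(M)^*$, the operator $T$ maps $\mathcal{F}(M)$ boundedly into $\ell_1$, not merely into $c_0$; this is the dual half of the Bessaga--Pe{\l}czy{\'n}ski characterization you cite. Once the target is $\ell_1$, the Schur property does the work and no gliding hump is needed: if $\Gamma$ were weakly precompact, $(\mu_k)$ would have a weakly Cauchy subsequence, its image under $T$ would be weakly Cauchy in $\ell_1$, hence norm convergent to some $y\in\ell_1$, and then $|\langle\mu_{k_j},\rho_{k_j}\rangle|\leq |y_{k_j}|+\norm{T\mu_{k_j}-y}_{\ell_1}\to 0$, contradicting the lower bound $\eta$. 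This is exactly how the paper argues (phrased as: $\lambda>0$ shows $T(\Gamma)$ is not relatively norm compact in $\ell_1$, hence by the Schur property not weakly precompact, hence $\Gamma$ is not weakly precompact, hence neither is $A$). Your closing perturbation from $\Gamma$ back to $A$ is then unnecessary, since weak precompactness passes to closed convex hulls directly. The choice of $\ell_1$ over $c_0$ is not cosmetic: the entire point of the target space is that weak precompactness and relative norm compactness coincide there.
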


\begin{proof}
    Let $\Gamma' \subseteq \mathcal{F}(M)$ be a bounded subset which is not relatively super weakly compact. Set $\Gamma = \overline{\operatorname{conv}} \Gamma'$. As $\Gamma$ is not super weakly compact, by Proposition \ref{Prop-KP} there is a WUC series $\sum_{k=1}^\infty \rho_k$ in $\operatorname{Lip}_0(M)$ such that 
    $$\lambda = \limsup_{k \rightarrow \infty} \sup \{|\langle \mu, \rho_k \rangle|: \mu \in \Gamma\} > 0.$$

    As $\sum_{k=1}^\infty \rho_k$ is WUC, the operator $T: \mathcal{F}(M) \rightarrow \ell_1$, $T(\mu) = (\langle \mu,\rho_k \rangle)_{k \in \N}$, is well defined and bounded. It follows from the fact that $\lambda > 0$ that $T(\Gamma) \subseteq \ell_1$ is a bounded set which is not relatively norm compact. By Proposition \ref{Proposition-Schur-char} and Schur property of $\ell_1$, $T(\Gamma)$ is not weakly precompact. As weak precompactness is preserved under bounded linear images, $\Gamma$ is not weakly precompact either.

    Hence, $\Gamma$ is not weakly precompact, and so neither is $\Gamma'$ (as being weakly precompact is preserved under convex hulls and closures, see Addendum of \cite{Rosenthal1977}).
\end{proof}

\begin{remark}
    In Theorem III.D.31 of \cite{WojtaszczykBook} it is shown that the dual of a rich subspace $X$ of $C(K,\R^n)$ is weakly sequentially complete. In the same way as above, the proof can be modified to show that $X^*$ has property (R). Indeed, the condition $(*)$ on the bottom of page 169 of \cite{WojtaszczykBook} follows not only if the set not relatively weakly compact, but also when the given set is not relatively super weakly compact.
\end{remark}

Now we prove the promised compact reduction for property (R).

\begin{prop} \label{Proposition-Comp-Red-R}
    Let $M$ be a complete metric space. Then $\mathcal{F}(M)$ has property (R) if and only if $\mathcal{F}(K)$ has property (R) for every compact $K \subseteq M$.
\end{prop}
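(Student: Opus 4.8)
The plan is to prove the two implications separately. The forward direction is trivial: if $\mathcal{F}(M)$ has property (R), then for any compact $K \subseteq M$, the space $\mathcal{F}(K)$ embeds isometrically as a complemented (in fact $1$-complemented, via a Lipschitz retraction being unavailable in general, but $\mathcal{F}(K)$ is always a subspace) subspace of $\mathcal{F}(M)$. Actually, since property (R) need not pass to arbitrary subspaces, one should be slightly careful; but the standard fact is that for $K \subseteq M$ the inclusion induces an isometric embedding $\mathcal{F}(K) \hookrightarrow \mathcal{F}(M)$ whose image is the closed span of $\{\delta(p) : p \in K\}$, and any weakly precompact set in $\mathcal{F}(K)$ is weakly precompact in $\mathcal{F}(M)$, hence relatively super weakly compact there; since relative super weak compactness is inherited by subspaces (it is an isomorphic/isometric invariant witnessed inside the space), we recover it in $\mathcal{F}(K)$. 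So the forward direction needs only these routine observations.

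For the harder converse direction, the plan is to use the machinery of the measure $\sigma$ from Proposition \ref{Proposition-Sigma} together with a compact reduction argument in the spirit of \cite{AliagaCompRed}. Suppose $\mathcal{F}(K)$ has property (R) for every compact $K \subseteq M$, and let $A \subseteq \mathcal{F}(M)$ be weakly precompact; we must show $\sigma(A) = 0$, i.e. $A$ is relatively super weakly compact. The key ingredient should be a ``support reduction'': elements of $\mathcal{F}(M)$ can be approximated in norm by finitely supported measures, and for finitely supported (or compactly supported) elements one works inside $\mathcal{F}(K)$ for a suitable compact $K$. More precisely, I would invoke the Aliaga--Noûs--Petitjean--Procházka type result that for every $\varepsilon > 0$ there is a compact $K \subseteq M$ and a bounded linear operator (a ``local projection'' or barycentric-type map) $P : \mathcal{F}(M) \to \mathcal{F}(K)$ with $\|P\| \le C$ for some universal constant and $\|P\mu - \mu\| \le \varepsilon$ for all $\mu$ in the given bounded set $A$ — or at least something that lets one write $A \subseteq P(A) + \varepsilon B_{\mathcal{F}(M)}$ with $P(A)$ living (up to the embedding) inside $\mathcal{F}(K)$. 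Then $P(A)$, being the bounded linear image of a weakly precompact set, is weakly precompact, and since $P(A) \subseteq \mathcal{F}(K)$ which has property (R), $P(A)$ is relatively super weakly compact, so $\sigma(P(A)) = 0$. By subadditivity and monotonicity of $\sigma$ (Proposition \ref{Proposition-Sigma}(2),(3)) we get $\sigma(A) \le \sigma(P(A)) + \sigma(\varepsilon B_{\mathcal{F}(M)}) \le 0 + 2\varepsilon$. Letting $\varepsilon \to 0$ yields $\sigma(A) = 0$, as desired.

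The main obstacle I anticipate is producing the compact set $K$ and the operator $P$ with the required uniformity over the weakly precompact set $A$ — this is exactly the technical heart of ``compact reduction'' and is where the argument of \cite{AliagaCompRed} does real work. One cannot simply take $K$ to be the union of the supports of countably many approximants of elements of $A$, since $A$ may be uncountable and that union need not be relatively compact in $M$; instead one needs the fact (from the theory of Lipschitz-free spaces over complete metric spaces) that $M$ can be exhausted by an increasing sequence of compact sets $K_n$ together with uniformly bounded operators $\mathcal{F}(M) \to \mathcal{F}(K_n)$ arising from Lipschitz retractions onto (finite) nets or from the Aliaga--Pernecká barycentric construction, such that these operators converge strongly to the identity. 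A secondary subtlety is checking that ``weakly precompact'' and ``relatively super weakly compact'' genuinely transfer through the isometric embedding $\mathcal{F}(K) \hookrightarrow \mathcal{F}(M)$ in both directions; for weak precompactness this is the $\ell_1$-basis characterization and is immediate, and for relative super weak compactness one uses that it is characterized intrinsically (e.g. via the finite tree property of Proposition \ref{Proposition-SWCC-char}(7), which only refers to the set, the ambient norm, and functionals, all of which restrict/extend appropriately between $\mathcal{F}(K)$ and $\mathcal{F}(M)$). Once these pieces are in place the $\sigma$-estimate closes the argument cleanly.
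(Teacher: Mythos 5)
Your overall strategy coincides with the paper's: the forward direction via the isometric embedding $\mathcal{F}(K) \hookrightarrow \mathcal{F}(M)$ and the intrinsic nature of weak precompactness and relative super weak compactness, and the converse via the quantity $\sigma$, compact reduction, and the estimate $\sigma(W) \leq \sigma(T(W)) + 2\varepsilon$. That part of the argument is correct and is exactly how the paper closes the proof.

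There is, however, one concrete gap in the converse direction. You posit a \emph{bounded} operator $P : \mathcal{F}(M) \to \mathcal{F}(K)$ with $\|P\mu - \mu\| \leq \varepsilon$ on $A$, and you use boundedness in an essential way when you claim that $P(A)$ is weakly precompact as the bounded linear image of a weakly precompact set. But the compact reduction theorem actually available (Theorem 2.3 of \cite{AliagaCompRed}) does not produce such a $P$: it produces a linear map $T : \operatorname{span} A \to \mathcal{F}(K)$ that is \emph{not necessarily bounded}, together with bounded operators $T_k : \mathcal{F}(M) \to \mathcal{F}(M)$ whose restrictions to $A$ converge to $T|_A$ uniformly on $A$. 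So the transfer of weak precompactness to $T(A)$ cannot be done in one line; it requires a perturbation argument: each $T_k(A)$ is weakly precompact (here boundedness of $T_k$ is used), and if $T(A)$ contained a sequence $C$-equivalent to the canonical $\ell_1$-basis, the uniform convergence $T_k|_A \to T|_A$ would force some $T_k(A)$ to contain a sequence $C/2$-equivalent to that basis, a contradiction. Once this is supplied, your $\sigma$-estimate goes through verbatim. You did correctly identify the uniformity of the approximation over $A$ as the technical heart, but the missing piece is not the existence of a uniformly bounded net of retraction-type operators converging strongly to the identity (which is not what \cite{AliagaCompRed} provides and is not needed); it is the replacement of ``bounded image of weakly precompact is weakly precompact'' by the $\ell_1$-basis perturbation step above.
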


\begin{proof}
    As property (R) is clearly passed to subspaces and $\mathcal{F}(K)$ is a subspace of $\mathcal{F}(M)$ for $K \subseteq M$, we need to show only the backwards implication. Let $W \subseteq \mathcal{F}(M)$ be weakly precompact. We will show that $\sigma(W) = 0$ and apply Proposition \ref{Proposition-Sigma} to show that $W$ is relatively super weakly compact. Fix $\epsilon > 0$ and use compact reduction for weakly precompact sets (Theorem 2.3. of \cite{AliagaCompRed}) to find compact $K \subseteq M$ and linear (not necessarily bounded) $T: \operatorname{span} W \rightarrow \mathcal{F}(K)$ such that
    \begin{enumerate}
        \item $\norm{\mu - T \mu}_{\mathcal{F}} \leq \epsilon$ for all $\mu \in W$,
        \item There are bounded linear maps $T_k: \mathcal{F}(M) \rightarrow \mathcal{F}(M)$, $k \in \N$, such that $(T_k|W)_{k \in \N}$ converges to $T|W$ uniformly on $W$.
    \end{enumerate}
    
    First we show that $T(W)$ is weakly precompact in $\mathcal{F}(K)$. As weak precompactness is preserved by bounded linear maps, for every $k \in \N$ the set $T_k(W)$ is weakly precompact. Suppose, for a contradiction, that $T(W)$ contains a sequence $C$-equivalent to the canonical basis of $\ell_1$ for some $C > 0$. Using (2) we find $k \in \N$ such that $T_k(W)$ contains a sequence $C/2$-equivalent to the canonical basis of $\ell_1$, contradicting that $T_k(W)$ is weakly precompact. Hence, $T(W)$ is weakly precompact as required.

    But $T(W) \subseteq \mathcal{F}(K)$, so by the assumption $T(W)$ is relatively super weakly compact. It follows from Proposition \ref{Proposition-Sigma} \textit{(1)} that $\sigma(T(W)) = 0$. Further, by (1) we get that $W \subseteq T(W) + \epsilon B_{\mathcal{F}(M)}$, and thus by Proposition \ref{Proposition-Sigma} \textit{(2)} and \textit{(3)} $$\sigma(W) \leq \sigma(T(W)) + \sigma(\epsilon B_{\mathcal{F}(M)}) \leq 0 + \operatorname{diam}(\epsilon B_{\mathcal{F}(M)}) = 2 \epsilon.$$
    As $\epsilon > 0$ was arbirary, $\sigma(W) = 0$ and $W$ is relatively super weakly compact by Proposition \ref{Proposition-Sigma} \textit{(1)} as required to show that $\mathcal{F}(M)$ has property (R).
\end{proof}

Combining the results above, we get the main theorem of this section.

\begin{theorem} \label{Theorem:Main}
    Let $X$ be a superreflexive space. Then $\mathcal{F}(X)$ has property (R).
\end{theorem}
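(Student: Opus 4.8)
The plan is to combine the two results already established in this section. Theorem \ref{Theorem-Comp-R} tells us that $\mathcal{F}(K)$ has property (R) whenever $K$ is a compact subset of a superreflexive space, and Proposition \ref{Proposition-Comp-Red-R} tells us that $\mathcal{F}(M)$ has property (R) as soon as $\mathcal{F}(K)$ does for every compact $K \subseteq M$. So I would simply apply these two facts in sequence.

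In more detail: let $X$ be superreflexive and let $K \subseteq X$ be an arbitrary compact subset. Since $X$ is in particular a Banach space, it is a complete metric space, so we may view $K$ as a compact subset of the complete metric space $X$; the point is just that $X$ itself plays the role of the ambient superreflexive space in Theorem \ref{Theorem-Comp-R}. Hence $\mathcal{F}(K)$ has property (R) by Theorem \ref{Theorem-Comp-R}. Since this holds for \emph{every} compact $K \subseteq X$, and $X$ is a complete metric space, Proposition \ref{Proposition-Comp-Red-R} applies with $M = X$ and yields that $\mathcal{F}(X)$ has property (R).

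There is essentially no obstacle here: the real content has already been carried out in Proposition \ref{Prop-KP} (the modification of the Kochanek--Pernecká argument, which now uses the super weak compactness characterization \textit{(1)} $\Leftrightarrow$ \textit{(7)} of Proposition \ref{Proposition-SWCC-char} in its first step), in Theorem \ref{Theorem-Comp-R} (passing through the $\ell_1$-valued operator $T$ and Schur's property), and in Proposition \ref{Proposition-Comp-Red-R} (the compact reduction, using the measure of super weak non-compactness $\sigma$ and its subadditivity from Proposition \ref{Proposition-Sigma}). If I had to name the delicate point, it is only a bookkeeping one: making sure the hypotheses of Proposition \ref{Proposition-Comp-Red-R} are met, i.e. that a superreflexive Banach space is a complete metric space and that each of its compact subsets is a compact subset of a superreflexive space — both of which are immediate. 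So the proof is a one-line deduction.

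\begin{proof}
    Let $K \subseteq X$ be compact. Since $X$ is superreflexive, Theorem \ref{Theorem-Comp-R} shows that $\mathcal{F}(K)$ has property (R). As $X$ is a complete metric space and $K \subseteq X$ was an arbitrary compact set, Proposition \ref{Proposition-Comp-Red-R} (applied with $M = X$) implies that $\mathcal{F}(X)$ has property (R).
\end{proof}
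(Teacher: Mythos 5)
Your proof is correct and coincides with the paper's own argument: apply Theorem \ref{Theorem-Comp-R} to each compact $K \subseteq X$ and then invoke the compact reduction of Proposition \ref{Proposition-Comp-Red-R} with $M = X$. Nothing further is needed.
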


\begin{proof}
    By Proposition \ref{Proposition-Comp-Red-R} it is enough to show that $\mathcal{F}(K)$ has property (R) for every $K \subseteq X$ compact. But this follows from Theorem \ref{Theorem-Comp-R}.
\end{proof}

\begin{remark}
    It follows from Theorem \ref{Theorem:Main} and Corollary \ref{Corollary-R} that $\mathcal{F}((\bigoplus_{N=1}^\infty \ell_1^N)_{\ell_2})$ is not isomorphic to a subspace of $\mathcal{F}(\ell_2)$, and also not isomorphic to a subspace of $\mathcal{F}(Y)$ for finite-dimensional $Y$. This was apparently unknown before the results of this paper.
\end{remark}

\section{Additional remarks and open problems}

\subsection{Compact reduction of weak Banach-Saks property}

In what follows we show that compact reduction for the weak Banach-Saks property is also true. This observation and its proof is due to R. Aliaga, G. Grelier and A. Procházka. We thank them for kindly letting us to include it in this paper.

\begin{prop}
    Let $M$ be a complete metric space. Then $\mathcal{F}(M)$ has the weak Banach-Saks property if and only if $\mathcal{F}(K)$ has the weak Banach-Saks property for every compact $K \subseteq M$.
\end{prop}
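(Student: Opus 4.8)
The ``only if'' direction is routine: the weak Banach--Saks property passes to closed subspaces (a sequence in a subspace $Y \subseteq Z$ is weakly convergent in $Y$ if and only if it is weakly convergent in $Z$, and norm Ces\`aro convergence is the same whether computed in $Y$ or in $Z$), and $\mathcal{F}(K)$ sits isometrically inside $\mathcal{F}(M)$ whenever $K \subseteq M$. So the plan is to prove the ``if'' direction, and for this it suffices to show that every bounded sequence $(\mu_n)_{n \in \N}$ in $\mathcal{F}(M)$ converging weakly to $0$ admits a Ces\`aro summable subsequence (subtract the weak limit; only boundedness is used afterwards). Set $W = \{\mu_n : n \in \N\}$, a relatively weakly compact, hence weakly precompact, subset of $\mathcal{F}(M)$. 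For each $j \in \N$ I would apply compact reduction for weakly precompact sets (Theorem 2.3 of \cite{AliagaCompRed}) with $\epsilon = 2^{-j}$, obtaining a compact $K_j \subseteq M$ and a linear map $T_j : \operatorname{span} W \to \mathcal{F}(K_j)$ with $\norm{\nu - T_j \nu}_{\mathcal{F}} \leq 2^{-j}$ for $\nu \in W$, where $T_j|_W$ is the uniform limit on $W$ of a sequence of bounded operators on $\mathcal{F}(M)$. Exactly as in Proposition~\ref{Proposition-Comp-Red-R} this last clause lets one verify that $(T_j \mu_n)_{n}$ converges weakly to $0$ in $\mathcal{F}(M)$: if $S$ is an approximating bounded operator with $\sup_n \norm{T_j \mu_n - S \mu_n}_{\mathcal{F}}$ small, then $\langle T_j \mu_n, f \rangle$ is within $\norm{f}$ times that supremum of $\langle \mu_n, S^* f \rangle \to 0$. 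Since $(T_j\mu_n)_n$ lies in the subspace $\mathcal{F}(K_j)$, it is a bounded sequence converging weakly to $0$ there as well.

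The heart of the argument is the diagonalization across the scales $j$. The naive scheme---at stage $j$ pass to a subsequence along which $(T_j \mu_n)_n$ is Ces\`aro summable, then diagonalize---breaks down, because Ces\`aro summability is not inherited by subsequences, so there is no reason for the diagonal sequence to remain Ces\`aro summable at the earlier scales. The fix I would use is the classical dichotomy of Erd\H{o}s and Magidor: every bounded sequence in a Banach space has a subsequence such that either \emph{all} of its further subsequences are Ces\`aro convergent or \emph{none} is. Since $\mathcal{F}(K_j)$ has the weak Banach--Saks property, the weakly null sequence $(T_j\mu_n)_n$ does have \emph{some} Ces\`aro summable subsequence, so the Erd\H{o}s--Magidor subsequence of it falls into the first alternative; being weakly null, all its subsequences are then Ces\`aro summable to $0$. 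Running this recursively yields nested infinite sets $\N \supseteq N_1 \supseteq N_2 \supseteq \cdots$ such that, for each $j$, every subsequence of $(T_j \mu_n)_{n \in N_j}$ is Ces\`aro null; in particular $(T_i \mu_n)_{n \in N_j}$ is Ces\`aro null for all $i \leq j$. Taking the diagonal $\nu_i := \mu_{n_i}$, where $n_i$ is the $i$-th element of $N_i$, produces a subsequence of $(\mu_n)$ for which $(T_j \nu_i)_{i}$ is Ces\`aro null for \emph{every} $j$ (its tail from index $j$ on is a subsequence of $(T_j\mu_n)_{n\in N_j}$, and discarding finitely many terms does not affect a Ces\`aro limit).

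Finally I would assemble the estimate: for each $j$ and each $m$,
\[
\Bigl\| \tfrac{1}{m}\sum_{i=1}^m \nu_i \Bigr\|_{\mathcal{F}} \;\leq\; \Bigl\| \tfrac{1}{m}\sum_{i=1}^m T_j\nu_i \Bigr\|_{\mathcal{F}} + \tfrac{1}{m}\sum_{i=1}^m \norm{\nu_i - T_j\nu_i}_{\mathcal{F}} \;\leq\; \Bigl\| \tfrac{1}{m}\sum_{i=1}^m T_j\nu_i \Bigr\|_{\mathcal{F}} + 2^{-j},
\]
so letting $m \to \infty$ gives $\limsup_m \norm{\frac1m\sum_{i=1}^m \nu_i}_{\mathcal{F}} \leq 2^{-j}$, and then letting $j \to \infty$ shows that $(\nu_i)$ is Ces\`aro summable to $0$. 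Hence every bounded weakly convergent sequence in $\mathcal{F}(M)$ has a Ces\`aro summable subsequence, i.e.\ $\mathcal{F}(M)$ has the weak Banach--Saks property. The only genuinely delicate point is the appeal to the Erd\H{o}s--Magidor dichotomy, which is what makes the cross-scale diagonalization legitimate; granting that, the remainder is a straightforward variant of the argument in Proposition~\ref{Proposition-Comp-Red-R}.
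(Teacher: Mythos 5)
Your proposal is correct and follows essentially the same route as the paper: compact reduction via Theorem 2.3 of \cite{AliagaCompRed}, the Erd\H{o}s--Magidor dichotomy to make the Ces\`aro summability stable under passing to further subsequences, a nested-subsequence diagonalization, and the final triangle-inequality estimate combining the approximation error with the Ces\`aro nullity at each scale. The only (immaterial) differences are that you apply the compact reduction to the full set $W$ at every scale up front rather than to the successively refined subsequences, and you take the diagonal as the $i$-th element of $N_i$ rather than $\min N_i$.
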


\begin{proof}
    Let $(\mu_n)_{n \in \N} \subseteq B_{\mathcal{F}(M)}$ be weakly null and let $(\epsilon_n)_{n \in \N} \subseteq (0,1)$ be a sequence decreasing to 0.
    Then the set $W = \{\mu_n : n \in \N\}$ is weakly precompact, and thus by Theorem~2.3 of~\cite{AliagaCompRed} there is a compact set $K_1 \subseteq M$ and linear (not necessarily bounded) $T: \operatorname{span} W \rightarrow \mathcal{F}(K_1)$ such that
    \begin{enumerate}
        \item $\norm{\mu_n - T \mu_n}_{\mathcal{F}} \leq \epsilon$ for all $n \in \N$,
        \item There are bounded linear maps $T_k: \mathcal{F}(M) \rightarrow \mathcal{F}(M)$, $k \in \N$, such that $(T_k|W)_{k \in \N}$ converges to $T|W$ uniformly on $W$.
    \end{enumerate}
    Set $\xi_n^1 = T \mu_n$ for $n \in \N$, then the sequence $(\xi_n^1)_{n \in \N}$ is weakly null (see the proof of Corollary 2.4 of \cite{AliagaCompRed}). Since $\mathcal{F}(K)$ has the weak Banach-Saks property, we find an infinite subset $N_1 \subseteq \N$ such that all further subsequences of $(\xi^1_n)_{n \in N_1}$ are Cesaro summable (the fact that we can choose a subsequence such that \textit{all} its further subsequences are Cesaro summable follows from \cite{ErdosMagidor1976}).
    We now apply the same procedure for $(\mu_n)_{n\in N_1}$ and $\epsilon_2$ and obtain infinite $N_2 \subseteq N_1$ and $(\xi_n^2)_{n \in N_1} \subseteq \mathcal{F}(K_2)$ such that $\norm{\mu_n-\xi_n^2}_{\mathcal{F}} \leq \epsilon_2$ for all $n \in N_1$, and such that every subsequence of $(\xi_n^2)_{n\in N_2}$ is Cesaro summable. We can also suppose that $\min N_1 \notin N_2$.
    Following the same scheme we get
    \begin{itemize}
        \item a decreasing sequence $(N_j)_{j \in \N}$ of infinite subsets of $\N$ with $\min N_j \notin N_{j+1}$ for $j \in \N$,
        \item for $j \in \N$ sequences $(\xi_n^j)_{n \in N_j}$ such that
        \begin{enumerate}[i)]
            \item Every subsequence of $(\xi_n^j)_{n\in N_j}$ is Cesaro summable.
            \item $\norm{\mu_n-\xi_n^j}_{\mathcal{F}} \leq \epsilon_j$ for all $n \in N_j$.
        \end{enumerate}
    \end{itemize}
    
    Take $M = \{\min N_i:i \in \N \}$.  
    We claim that $(\mu_n)_{n \in M}$ is Cesaro summable.
    Indeed, fix $\epsilon>0$ and let $n \in \N$ be such that $\epsilon_n \leq \epsilon/3$.
    Let us enumerate $M=\{m_1<m_2<\ldots\}$.
    Notice that $(m_k)_{k=n}^\infty \subseteq N_n$ for every $n \in \N$. Hence, for $k \geq n$ we have

    \begin{align*}
        \norm{\frac{\mu_{m_1}+\cdots+ \mu_{m_{n-1}}+\mu_{m_n}+\cdots+ \mu_{m_k}}{k}} &\leq \frac{n-1}{k} + \norm{\frac{\mu_{m_n} + \cdots + \mu_{m_{k}}}{k-n+1}} \\
        & \leq \frac{n-1}{k} + \norm{\frac{\xi^n_{m_n} + \cdots + \xi^n_{m_{k}}}{k-n + 1}} + \epsilon_n,
    \end{align*}
    where the second inequality follows from ii). Now we can use i) to see that for $k$ large enough so that $\frac{n-1}{k} < \epsilon/3$ and $\norm{\frac{\xi^n_{m_n} + \cdots + \xi^n_{m_{k}}}{k-n + 1}} <\epsilon/3$ we have that $\norm{\frac{\mu_{m_1}+ \cdots + \mu_{m_k}}{k}} < \epsilon$, proving that $(\mu_n)_{n \in M}$ is Cesaro summable.
\end{proof}

\subsection{Open problems}

Note that all the examples of Banach spaces in which every relatively weakly compact set is relatively super weakly compact mentioned above are weakly sequentially complete. This inspires the following question:

\begin{question} \label{Question-R-noWSC}
    Is there a Banach space in which every relatively weakly compact set is relatively super weakly compact but which is not weakly sequentially complete?
\end{question}
\noindent A natural candidate to test for Question \ref{Question-R-noWSC} is the James's space or some of its variants.

The next problem is concerned about super-variants of some properties studied in this paper.

\begin{question} \label{Question-super}
    Is the space $\mathcal{F}(X)$ super weakly sequentially complete (does it have super weak Banach-Saks property), for $X$ superreflexive? What about $\mathcal{F}(\R^2)$?  
\end{question}
Positive answer to Question \ref{Question-super} would give us that $\mathcal{F}(\ell_2)$ has nontrivial cotype, solving the dichotomy of \cite{GarciaGrelier2023} (see the beginning of Section 5 of \cite{GarciaGrelier2023}). 

\bibliographystyle{siam}
\bibliography{bibliography}

\end{document}